\documentclass[12pt,a4paper]{article}
\usepackage{cmap} 
\usepackage[utf8]{inputenc}
\usepackage{amsmath,amsfonts,amssymb,amsthm}
\usepackage[ukrainian,english]{babel}
\usepackage{wrapfig}
\usepackage{graphicx}
 \usepackage[colorlinks=true]{hyperref}

\theoremstyle{definition}

\newtheorem{theorem}{Theorem}

\newtheorem{lemma}{Lemma}

\newtheorem{remark}{Remark}
\newtheorem{corollary}{Corollary}

\begin{document}

\marginpar{\tiny 20260330}
%

\begin{center}
{\Large On the  non-uniqueness of continuous solutions to differential equations with a discrete state-dependent delay}

\medskip

Alexander Rezounenko\footnote{ORCID  \href{https://orcid.org/0000-0001-8104-1418}{https://orcid.org/0000-0001-8104-1418}}

Kharkiv National University, 4 Svobody sqr., Kharkiv, 61022, Ukraine

rezounenko@gmail.com
\end{center}

We discuss the non-uniqueness of continuous solutions to differential equations with a {\it discrete } state-dependent delay and continuous initial functions. We are interested not only in the fact (conditions) of non-uniqueness, but in additional information on the number of non-unique solutions and discuss an approach to classify them.  We provide a few explicit (easy to verify) examples  of  the non-uniqueness of continuous solutions and propose an approach to their classification. This partial classification may be illustrated by using just three collors and their simple and intuitive combination.  
We recognize that this initial classification is not exhaustive, and further study is necessary to build a complete picture.

\smallskip

Keywords: non-uniqueness; uniqueness; state-dependent delay;

\smallskip

Mathematics Subject Classification 2020: 
34K43; 34A12; 34K05.


\section{Introduction}

The study of state-dependent delay equations has a long history 
and constantly attracts attention of many researchers due to big number of applied problems  \cite{Hartung-Krisztin-Walther-Wu-2006}.

We are interested in solutions of initial valiue problems (Cauchy problems) for a delay equation with a {\it discrete } delay argument. Let us consider the problem in details. 

Let $D$ be a domain in $\mathbb{R}^n$. 
Consider the following {\it state-dependent delay} (SDD) differential equation (system)
\begin{equation}\label{sdd-winst-E}
\dot x(t) = F(x(t), x(t -g(x(t)))). 
\end{equation}
with  SDD functional 
  $g : D \to [0,h]$ 
 and $F: D \times D \to\mathbb{R}^n$, both continuous. 
Here $ \dot x(t)$ denotes the right-hand derivative of $x(t).$

Along with (\ref{sdd-winst-E}), we consider a simplified equation 
\begin{equation}\label{AR-sdd-2026_eq}
\dot{x}(t) = f(x(t - g(x(t))), 
\end{equation}
with a continuous $f: \mathbb{R} \to\mathbb{R}$ (or $f: \mathbb{R}^n \to\mathbb{R}^n$ in case of systems).

Basic properties of delay equations one may find in classical monographs \cite{Walther_book,Hale-book-1977}. For more details on SDD differential equations we refer to review \cite{Hartung-Krisztin-Walther-Wu-2006}. 
In the last review \cite{Hartung-Krisztin-Walther-Wu-2006} one may find a number of real-world applications. 

We are mainly interested in properties of uniqueness / non-uniqueness of solutions to (\ref{sdd-winst-E}) and (\ref{AR-sdd-2026_eq}). 
As will be discussed below, we will distinguish an {\it immediate non-uniqueness} and {\it waiting-time non-uniqueness}. Since our focus is on the immediate non-uniqueness (for the clarity of the essential properties), we are mainly consider small $t\ge 0$. 

For simplicity, we choose in (\ref{sdd-winst-E}) and (\ref{AR-sdd-2026_eq})
an autonomous form of delay equation (system), so we agreed that initial data are defined for $\theta \in [-h,0]$ (bounded delay interval) and   
an initial condition is
\begin{equation}\label{AR-sdd-2026_ic}
x(\theta) = \varphi(\theta), \quad \theta \in [-h,0]
\end{equation}
with a {\it continuous} initial function $\varphi$. We denote it by $\varphi\in C[-h,0]$. The multidimensional case  $\varphi\in C([-h,0]; \mathbb{R}^n)$ and the corresponding changes in (\ref{AR-sdd-2026_eq}) could be considered in a similar way. The case of unbounded delay ($h=+\infty$
)  can be considered in the same way. 
More precisely, the questions under consideration can be easily reformulated for {\it locally bounded delay} systems.

In literature, one may found important properties of solutions with state-dependent delays of more general form $r(x_t)$, where, as usual for delay equations,  $x_t\equiv x(t+\theta), \theta\in [-h,0]$ and $r: C([-h,0]; \mathbb{R}^n) \to [0,h]$. 
For more details  and references we refer to review \cite{Hartung-Krisztin-Walther-Wu-2006}.

The existence of solution to 
(\ref{sdd-winst-E}),  (\ref{AR-sdd-2026_ic}) 
is easily proven by the classical Brouwer's fixed-point theorem, provided $F, g, \varphi$ are continuous. The uniqueness needs more care.

It is not difficult to check that (local) Lipschitz conditions for  $
F, f, g$ and $\varphi$ implies the uniqueness of solutions (see e.g. \cite{Mallet-Paret} and references therein). 

If one is interested in dynamical properties of solutions, even more restrictions on the initial $\varphi$ may be useful  ($C^1$-solutions, solution manifold \cite{Walther-JDE-2003}).

An additional condition on the state-dependent delay introduced in \cite{Rezounenko_NA-2009} (see also \cite{Rezounenko-JMAA-2012})
allows to prove the uniqueness and well-posedness of solutions in the whole space of merely continuous initial functions $\varphi \in C[-h,0]$. The result indicates that the simplest 
form of SDD which may lead to the non-uniqueness is $r(x_t)=g(x(t))$. 

Taking into account the mentioned results, for arbitrary  {\it continuous} initial function $\varphi\in C[-h,0]$,  without additional condition on the SDD \cite{Rezounenko_NA-2009}, one cannot, in general,  guaranty the uniqueness of solution to 
(\ref{sdd-winst-E}),  (\ref{AR-sdd-2026_ic}) 
and even to (\ref{AR-sdd-2026_eq}), (\ref{AR-sdd-2026_ic}). 
It is clearly illustrated by the example 1 presented below  \cite{Driver-AP-1963}.

Let us explain our main ideas. 

In addition to the standard view of the non-uniqueness of solutions, when it is sufficient to prove the existence of more than one solution, we are interested in the following questions:

\smallskip

{\bf (A)} How many solutions does the Cauchy problem have?

\smallskip

{\bf (B)} In the case of many solutions, are all the solutions "the same", and if not, can they be classified in some way?

\smallskip

We offer for discussion partial answers to both questions (A) and (B).

A clear example of the Cauchy problem with an infinite (uncountable) number of solutions demonstrates the enormous variety of possible answers to question (A), even in the simplest, scalar case (see below).

Discussing question (B), from our point of view, non-unique solutions are not "the same". It seems quite natural to pay attention to the delay argument of each solution and classify the solutions according to the behavior of this argument.
For a more intuitive (visual) understanding, we propose to "paint" the solutions in different colors (blue, red, yellow), depending on the behaviour of the delay  argument.

Since any continuous solution to (\ref{sdd-winst-E}),  (\ref{AR-sdd-2026_ic}) is continuously differentiable for $t>0$, the (local) $C^1$ property of SDD $g$ implies the $C^1$ property of the delayed argument $s(t)\equiv t-g(x(t))$. A classical connection between the sign of $\dot s(t)$ and the monotonicity of $s(t)$, advises to look closer on solutions with a monotone delay argument. Another theorem from Calculus says that  in case  $\dot s_{+}(0)\neq 0$ the sign of $\dot s(t)$ is preserved (for small $t>0$), so one has a strictly monotone $s(t)$.  

As an additional, not the main, idea for discussion, we propose to consider the "graphs" of solutions in an extended coordinate system, in which, in addition to the independent argument $t$ (time) and the value of $x(t)$, we trace the (dependent) coordinate - the delay  argument $s(t)$.
As we will see below, such a view of the "extended graph" provides a surprisingly simple perception of the solutions. At the same time, the projections onto the classical subspaces ([time, solution value] and [time, delay  argument]) reproduces different sides of the classical picture.

We hope our colored / extended-graph point of view will be useful for a future study.

\bigskip

\section{Non-uniqueness of solutions}

Let us discuss a few particular, but important examples 1-3, which propose a particular point of view on the question of non-uniqueness of solutions.

\subsection{Example 1/1963.}

Let us give an example of {\it non-uniqueness} 
of solutions to the initial value problem for an ordinary differential equation with state-dependent delay. This example is given in the work \cite{Driver-AP-1963} of R. Driver 
(1963).

A simple scalar equation
\begin{equation}\label{Driver-example}
\dot y(t) = -2 y(t-y(t)) + 5,\quad  t>0
\end{equation}
with continuous initial function
\begin{equation}\label{Driver-example-ic}
y(t)= \varphi (t)= |4+t|^{1/2} + 2,\quad t\le 0
\end{equation}

has {\bf two} continuous (even infinitely differentiable) 
solutions  
\begin{equation}\label{Driver-example-solutions}
y^r(t)=4+t,\, t\ge 0\,  \mbox{ and } \,  y^y(t)=4+t-t^2,\, t\in [0,2].
\end{equation}
  \begin{wrapfigure}{r}{0.5\linewidth} 
    \centering
    \includegraphics[width=0.90\linewidth]{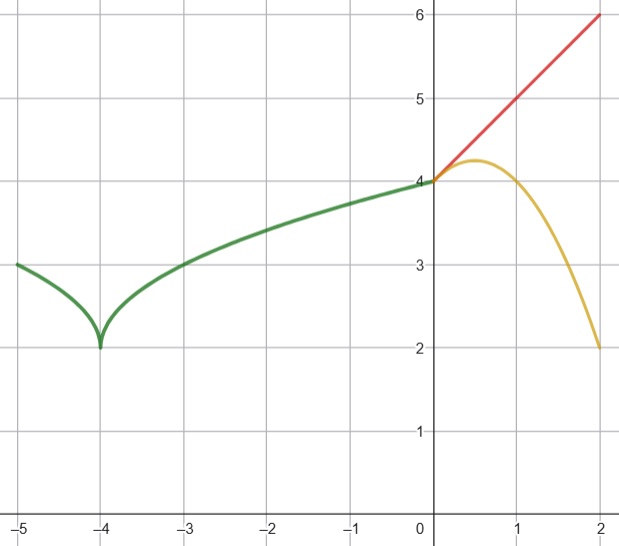}
    \caption{Example 1/1963}
    \label{fig:Driver-graph_1}
\end{wrapfigure}


The meaning of supscripts will be explained below. 



\medskip

The same idea (an example) was used in \cite[p.395]{Winston-JDE-1970} (1970). 

The analysis of this elegant Drivers`s example leads to different conclusions why the non-uniqueness appears. Naturally, the initial function is not Lipschitz and the SDD 
does not satisfy the additional condition \cite{Rezounenko_NA-2009}.

\subsection{Example 2/2010.} 

%

 Careful analysis of the Driver's example 1 allows to construct the following  one \cite{Rezounenko-metod-2010}. 
 
 Consider  equation 
\begin{equation}\label{metod2010-nu4}
\dot x(t) = \frac{1}{\alpha-1} \cdot x(t-|x(t)|) +1,\quad  t>0 
\end{equation}
%
with the initial function
(here $\alpha\in (0,1)$)
\begin{equation}\label{metod2010-nu7}
\hskip-30mm
x(t)=\varphi (t)\equiv \left\{ \begin{array}{ll}
-(-1-t)^\alpha, & \hbox{ if }\quad -2\le t\le -1;\\
(t+1)^{\alpha}, & \hbox{ if }\quad -1<t\le 0,
\end{array}
\right.
\end{equation}
The Cauchy problem (\ref{metod2010-nu4}), (\ref{metod2010-nu7})
 has 
{\it three} solutions
(for small $t>0$):
\begin{equation}\label{metod2010-nu8}
x^r(t)= t+1, \quad 
x^y(t)= t+1 -t^{\frac{1}{1-\alpha}}
\quad \hbox{ and }\quad x^b(t)= t+1
+t^{\frac{1}{1-\alpha}}.
\end{equation}

%
%

  \begin{wrapfigure}{r}{0.5\linewidth} 
    \centering
    \includegraphics[width=0.80\linewidth]{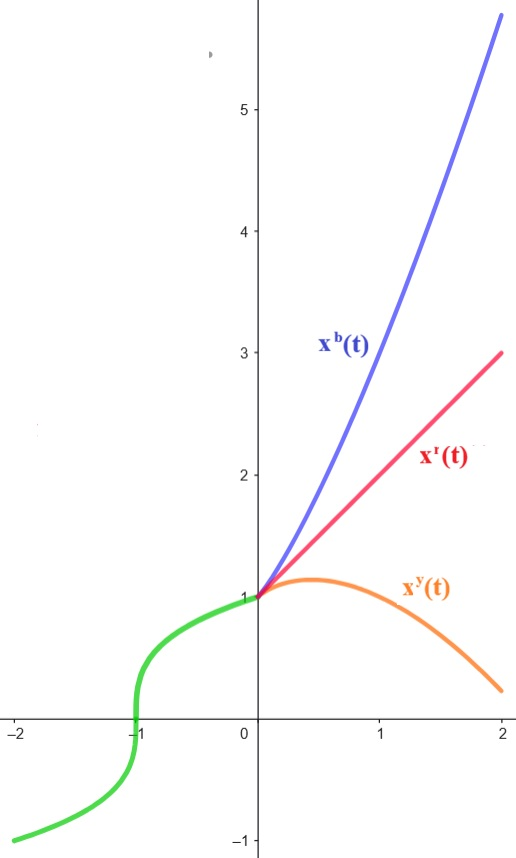}
    \caption{Example 2/2010}
    \label{fig:2}
\end{wrapfigure}


\begin{remark} 
One can see the similarities of Cauchy problems (\ref{Driver-example}), (\ref{Driver-example-ic}) and  (\ref{metod2010-nu4}),  (\ref{metod2010-nu7}), (take, for example, $\alpha = \frac{1}{2}$).
Nevertheless, there are three  solutions  (\ref{metod2010-nu8}) and only two solutions  (\ref{Driver-example-solutions}). 
Why there is no the third solution in the Driver's example?
\end{remark}

Since questions (A) and (B) are very general, we restrict ourself to the simplest case of {\it one} discrete state-dependent delay and continuous initial functions with only {\it one} point of non-Lipschitz. 
It is natural that several SDDs and several points of non-Lipschitz may multiply the number of solutions.  

To answer question (A) we construct the following example, which is a natural generalization of (\ref{metod2010-nu4}),  (\ref{metod2010-nu7}). 

\subsection{Example 3/ Key example of the non-uniqueness}

Consider the state-dependent delay differential equation
\begin{equation}\label{eq:main3}
\dot x(t) = -\,x\bigl(t - g(x(t))\bigr), \qquad t\ge 0,
\end{equation}
with a fixed continuous initial function $\varphi:[-h,0]\to\mathbb{R}$ 
such that $\varphi(0)=1$. 

Denote the delayed argument
\begin{equation}\label{eq:main3-s}
s(t) \equiv t - g(x(t)).
\end{equation}
At \(t=0\) we set \(s^0:=s(0)\).
We set $x(0)=\varphi(0)=1$. 
%
Consider $g(x)=|x|$, so $g(1)=1$ and $s(0)=-1$.

To have multiple solutions we need $s^0=s(0)=-1$ to be 
 {\it the point of non-Lipschitz} of initial function $\varphi$. This is obvioulsly, a necessary assumpion. Otherwise, the Lipshitz functions arguments leads to the uniqueness of continuous solutions.

Now we construct \(\varphi\) which has two different one-sided Hölder
behaviors at the single non-Lipschitz point \(s^0\):

\begin{equation}\label{eq:main3-ic}
\varphi(\theta)=
\begin{cases}
-1 - B(s^0 -\theta)^{\beta},& \theta\in [-h, s^0],\\[4pt]
-1 + A(\theta -s^0)^{\alpha}, &\theta\in (s^0, -\delta],\\[4pt]
\chi(\theta), & \theta \in (-\delta, 0]
\end{cases}
\quad =
\begin{cases}
-1 - B(-1 - \theta)^{\beta},& \theta\in [-h, -1],\\[4pt]
-1 + A(\theta+1)^{\alpha}, &\theta\in (-1, -\delta],  \\
\chi(\theta), & \theta \in (-\delta, 0]
\end{cases}
\end{equation}
with \(A,B>0\) and $\alpha,\beta\in (0,1)$, $\chi$ - any continuous connector,  $\chi(0)=1$ (to have $\varphi\in C[-h,0]$ and $\varphi(0)=1$), $\delta \in (0,1)$ fixed. For example, $\chi$ can be a linear function. 

One easily checks that 
\begin{equation}\label{red-solution}
x^r(t)=t+1, \quad t>0
\end{equation}
is a solution. 

  \begin{wrapfigure}{r}{0.5\linewidth} 
    \centering
    \includegraphics[width=0.80\linewidth]{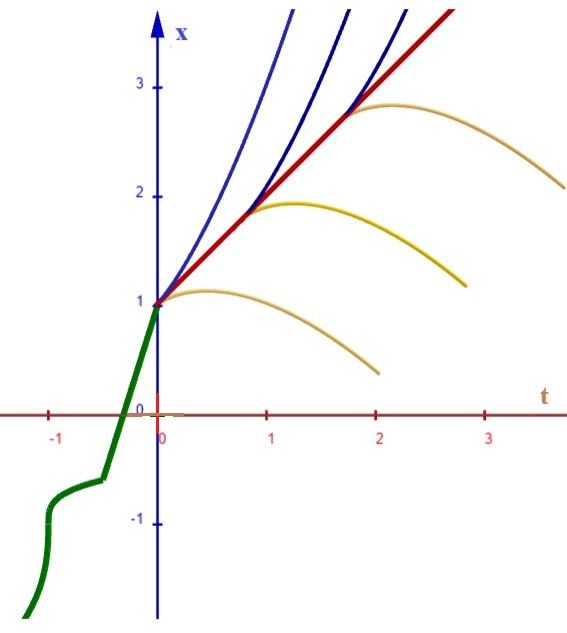}
    \caption{Example 3}
    \label{fig:3}
\end{wrapfigure}

One may verify that we have two families of solutions ($\tau\ge 0$ is a parameter).

The right-side family
\begin{equation}\label{yellow-tau-solutions}
x^\tau(t)=
\begin{cases}
1+t, & t\in [0, \tau],\\[4pt]
1 + t - C (t-\tau)^{\frac{1}{1-\alpha}},& t\in (\tau,\tau+\sigma],
\end{cases}
\end{equation}
with $C\equiv A^\frac{1}{1-\alpha}(1-\alpha)^\frac{1}{1-\alpha}$ and the left-side family 
  
\begin{equation}\label{blue-tau-solutions}
y^\tau(t)=
\begin{cases}
1+t, & t\in [0, \tau],\\[4pt]
1 + t + D (t-\tau)^{\frac{1}{1-\beta}},& t\in (\tau,\tau+\sigma],
\end{cases}
\end{equation}
with $D\equiv B^\frac{1}{1-\beta}(1-\beta)^\frac{1}{1-\beta}$. 
Here $\sigma>0$ is fixed.

\medskip

The above example gives an answer to question  (A): it may be infinite (even uncountable) number of continuous solutions to differential equation with 
{\it one} discrete state-dependent delay and a continuous initial function with only {\it one} point of non-Lipschitz. 

\medskip

\subsection{Colors of solutions} 

This short subsection is not about math results, but about intuition following each particular  solution/type of solution. 

The main idea: Let us color solutions 
depending on the behaviour of the  delayed argument $s(t)$ for  $t\in [t^1,t^2]$ (see (\ref{eq:main3-s})). 
We call $x(t)$ an {\it yellow solution} if its delayed argument $s(t)$ is strictly increasing for $t\in [t^1,t^2]$. 
We call $x(t)$ a {\it blue solution} if its delayed argument $s(t)$ is strictly decreasing for $t\in [t^1,t^2]$. 
We call $x(t)$ a {\it red solution} if its delayed argument $s(t)$ is constant for $t\in [t^1,t^2]$. 

One can easily see the colors of solutions constructed above. 
Clearly, $x^r(t)=t+1, \quad t>0$ (see (\ref{red-solution})) is a red solution. 
For example, solutions $x^\tau(t)$, given by (\ref{yellow-tau-solutions}), are red-yellow. More precisely, $x^\tau(t)$ are red for $t\in [0,\tau]$ and yellow for $t\in (\tau,\tau+\sigma]$.
Similarly,  solutions $y^\tau(t)$, given by (\ref{blue-tau-solutions}), are red-blue. More precisely, $y^\tau(t)$ are red for $t\in [0,\tau]$ and blue for $t\in (\tau,\tau+\sigma]$. 
For a particular case $\tau=0$, the corresponding $x^0(t)$ is yellow and $y^0(t)$ is blue. 

\begin{remark}\label{colors-const-delay}
The classical theory of differential equations with a constant delay $g(\cdot)=r>0$ can be colored as a {\it theory of  yellow solutions} because the corresponding delay arguments $s(t)=t-r$ is strictly increasing. Moreover, the theory of ordinary differential equations (without delay) is also "yellow" because $s(t)=t$ is also strictly increasing.

However, this does {\bf not} in any way divert close attention from other classes of solutions.
\end{remark}

\subsection{Red solutions}

As we saw in the previous examples 1-3, the huge number of solutions are connected to the existence of the "{\it red solution}". It attracts an additional attention to this kind of solutions. Let us consider some necessary conditions for the existence of red solutions. 

\smallskip

{\bf Lemma RS.} {\it 
Assume Cauchy problem (\ref{AR-sdd-2026_eq}), (\ref{AR-sdd-2026_ic}) has a continuous solution $\widetilde x(t), t\in [-h, \epsilon],$ $ \epsilon>0$ with the delayed argument $s(t) \equiv s^0 \equiv $ constant for $t\in [0,\epsilon]$ (see (\ref{eq:main3-s})). 
Then 

a) $f(\varphi(s^0))\neq 0.$

b) The state-dependent delay $g(x)$ is a {\tt linear} functional in a neighbourhood of $\varphi(0)=\widetilde x(0)$. 
The delay has a form $g(S)  = (S-\varphi (0))\cdot [f(\varphi(s^0))]^{-1}- s^0$ (in the corresponding neighbourhood). 

c) The red solution itself is given by 
\begin{equation}\label{AR-sdd-red-solution}
 \widetilde x(t) =  \varphi (0) + t\cdot f(\varphi(s^0)), \quad t\in [0, \epsilon]. 
\end{equation}
}

%
%
%

\begin{remark}
 The results of Lamma RS make all the systems with locally {\tt nonlinear} state-dependent delays $g(x)$  free of "{\it red solutions}". 
This property seems particularly useful  for models describing real world applications. 
\end{remark}

\noindent {\bf Corollaries RS.} \\
1) Combining items $a)$ and $c)$, one sees that a red solution is a {\it linear} function, but 
not a constant one. 
\\
2) 
No multiple red solutions may start at $t=0$. It follows from (\ref{AR-sdd-red-solution}), since all such a solution should use the same values $\varphi (0)$ and $f(\varphi(s^0))$ for the profile (\ref{AR-sdd-red-solution}). 
\\

\begin{remark}
 The previous Corollary RS 
gives even more interesting conclusion: the existence of a red solution is {\bf not} directly connected to the question of uniqueness/ nonuniqueness of solutions. More precisely, since any (one) red solution uses only values $\varphi (0)$ and $f(\varphi(s^0))$ for its shape (\ref{AR-sdd-red-solution}), then one can easily change initial function $\varphi$ for a new initial value problem with  $\widetilde \varphi$, but keeping the values $\widetilde \varphi (0)=\varphi (0)$ and $f(\widetilde  \varphi(s^0))=f(\varphi(s^0))$. As an example, we can consider equation (\ref{eq:main3}) and initial function 
\begin{equation}\label{lin-ic}
\widetilde \varphi(\theta)=2\theta+1, \quad \theta\in [-2,0].
\end{equation}
More detailed, let us compare (\ref{eq:main3}), (\ref{eq:main3-ic})
and (\ref{eq:main3}), (\ref{lin-ic}). We see that $\varphi (0)=\widetilde \varphi (0)=1$ and $f(\varphi(s^0))=f(\widetilde  \varphi(s^0))=1$, hence  solution (\ref{red-solution})
$x^r(t)=t+1, \quad t>0$ is the red solution of {\it both} initial value problems (\ref{eq:main3}), (\ref{eq:main3-ic})
and (\ref{eq:main3}), (\ref{lin-ic}). On the other hand, we saw that  (\ref{eq:main3}), (\ref{eq:main3-ic}) has mupliple solutions,  while (\ref{eq:main3}), (\ref{lin-ic}) has a unique solution. The uniqueness  follows from the Lipschitz property of $\widetilde \varphi(\theta)$ (\ref{lin-ic}). So, the mentioned 
 solution (\ref{red-solution}) of initial problem (\ref{eq:main3}), (\ref{lin-ic})  is  {\it unique} and {\it red}.  
\end{remark}

\smallskip

{\it Proof of Lemma RS} 
is surprisingly short.
If  $\widetilde x(t), t\in [-h, \epsilon], \epsilon>0$ with the delayed argument $s(t) \equiv s^0 \equiv $ constant for $t\in [0,\epsilon]$ is  a solution to  (\ref{AR-sdd-2026_eq}), (\ref{AR-sdd-2026_ic}), then the right hand side of (\ref{AR-sdd-2026_eq}) is constant and equals to $f(\varphi(s^0))$. Hence, by the left hand side of (\ref{AR-sdd-2026_eq}), one has a linear profile
\begin{equation}\label{sdd-linear}
 \widetilde x(t) = \widetilde x(0) + t\cdot f(\varphi(s^0)) = \varphi (0) + t\cdot f(\varphi(s^0)). 
\end{equation}
 
 {\it Case 1: $f(\varphi(s^0))=0$}. Hence $ \widetilde x(t) =  \varphi (0)$. In this case, by (\ref{eq:main3-s}), $s(t)$ cannot be constant. So, no such a solution may exist.

  {\it Case 2: $f(\varphi(s^0))\neq 0$}. 
  Let us introduce the new variable (see (\ref{sdd-linear})) $S \equiv \varphi (0) + t\cdot f(\varphi(s^0))$. In fact, it is the delayed argument of $g(x)$  for the particular solution $ \widetilde x(t)$. Hence $t=(S-\varphi (0))\cdot [f(\varphi(s^0))]^{-1}$ and $g(S)  = (S-\varphi (0))\cdot [f(\varphi(s^0))]^{-1}- s^0, $ which is linear.

%
%

\begin{remark}
Depending on the sign of $f(\varphi(s^0))\neq 0$, the linearity of the $g(x)$ is considered on a half-neighborhood of $\widetilde x(0)=\varphi(0)$ only. More precisely, if $f(\varphi(s^0))> 0$, then $U_{+}(\varphi(0))$ is considered. If $f(\varphi(s^0))< 0$, then $U_{-}(\varphi(0))$ is of interest. It makes the class of SDD function $g$ even more easily selected. 
\end{remark}

%
%
\begin{remark}
The simillar situation holds for a more general system 
$$\dot x(t) = \widehat{f}\left( x^1(t-g^1(x(t))),..., x^k(t-g^k(x(t))) \right).
$$
Assume the above system has a red solution such that 
all $s^i(t) \equiv t- g^i(x(t))\equiv s^{i0} =$ const, $i=1,...,k$, then all $g^i$ are {\it linear}.
\end{remark}

Discussing red solutions for more general system 
(\ref{sdd-winst-E}),  
 we have the following lemma. 
 \begin{lemma} Let $F$ be continuous and locally Lipschitz in its first argument, $g$ and $\varphi$ be continuous. Then the Cauchy problem (\ref{sdd-winst-E}),  (\ref{AR-sdd-2026_ic})  may not have more than one red solution. 
 \end{lemma}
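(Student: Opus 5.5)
Suppose $x_1$ and $x_2$ are two red solutions of (\ref{sdd-winst-E}), (\ref{AR-sdd-2026_ic}) on $[0,\epsilon]$, with $\epsilon>0$ the minimum of their existence intervals. Both start from the same initial function $\varphi$. The first thing to check is that they share the same constant delayed argument. By definition of a red solution, $s_i(t)=t-g(x_i(t))\equiv s_i^0$ on $[0,\epsilon]$. Evaluating at $t=0$ and using $x_i(0)=\varphi(0)$ gives
$$s_i^0=s_i(0)=-g(\varphi(0)), \qquad i=1,2,$$
so $s_1^0=s_2^0=:s^0$. This common value depends only on $\varphi(0)$ and $g$. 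It is the same observation that underlies Corollary RS 2) for (\ref{AR-sdd-2026_eq}).

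Next, since $s^0\in[-h,0]$ is fixed, the delayed term along either solution is the same constant vector $\varphi(s^0)$. Hence both $x_1$ and $x_2$ solve, on $[0,\epsilon]$, the same ordinary differential equation
$$\dot y(t)=G(y(t)),\qquad G(y):=F\bigl(y,\varphi(s^0)\bigr),\qquad y(0)=\varphi(0).$$
By hypothesis $F$ is continuous and locally Lipschitz in its first argument, so $G$ is locally Lipschitz near $\varphi(0)$. The classical Picard--Lindel\"of (Gronwall) uniqueness theorem then gives $x_1\equiv x_2$ on $[0,\epsilon]$.

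Two technical points need care.

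\emph{Right-hand derivatives.} The derivative in (\ref{sdd-winst-E}) is only a right-hand derivative. However, $x_i$ is continuous and $t\mapsto F(x_i(t),\varphi(s^0))$ is continuous. A continuous function whose right-hand derivative is continuous is $C^1$, so the integral form $x_i(t)=\varphi(0)+\int_0^t G(x_i(\tau))\,d\tau$ holds. Gronwall's inequality applies on a small interval on which both solutions remain in a neighbourhood of $\varphi(0)$ where $G$ is Lipschitz. A standard continuation argument extends the agreement to all of $[0,\epsilon]$: take the supremum of the times up to which $x_1=x_2$, and restart the argument from that point if it is smaller than $\epsilon$.

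\emph{Meaning of ``one red solution''.} The main obstacle is interpretive. The statement must be read as: any two red solutions agree on their common interval of existence. A red solution defined on a shorter interval is a restriction of one defined on a longer interval and is not counted as a different solution. This matches the sense of Corollary RS 2), so no further work should be needed.

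Note also that, unlike Lemma RS, we do not need to identify the form of $g$. The constancy of $s$ alone reduces the problem to an ODE with a frozen delayed term.
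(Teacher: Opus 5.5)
Your proof is correct and matches the paper's argument: both red solutions have the same constant delayed argument $s^0=-g(\varphi(0))$, so both solve the autonomous ODE $\dot y=F(y,\varphi(s^0))$ with $y(0)=\varphi(0)$, and the local Lipschitz property in the first argument gives uniqueness. Your extra remarks on right-hand derivatives, the continuation to the common interval, and reading ``one red solution'' as agreement on a common interval are sound refinements that the paper leaves implicit.
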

 
 {\it Proof.} Assume 
 there are two red solutions of (\ref{sdd-winst-E}),  (\ref{AR-sdd-2026_ic}), denote them $x^{r,1}(t)$ and $x^{r,2}(t)$. Consider their delay arguments $s^1(t)\equiv s^1(0)=-g(x^{r,1}(0))=-g(\varphi(0)) =-g(x^{r,2}(0)) = s^2(0) \equiv s^2(t).$
Hence, by (\ref{sdd-winst-E}), both solutions satisfy the same ODE 
$\dot x(t) = \widehat{F}(x(t))$, where 
 $\widehat{F}(x) \equiv {F}(x, \varphi(-g(\varphi(0))).$ Since $ \widehat{F}(x)$ is  locally Lipschitz, the Cauchy problem 
 $\dot x(t) = \widehat{F}(x(t)),\, x(0)= \varphi(0)$ has the unique solution $ \widehat{x}(t)$. So, $x^{r,1}(t)\equiv \widehat{x}(t) \equiv  x^{r,2}(t)$. The proof is complete. 
 
%
%

\subsection{Graphs}

This subsection helps to visialise the main idea presented below.  
Let us discuss the graphs of solutions (\ref{red-solution}), (\ref{yellow-tau-solutions}), (\ref{blue-tau-solutions}). 
The standard approach is to draw all of them on the $(t,x)$-plane.

However, we mentioned the importance of the behaviour of the delayed argument $s(t)\equiv t- g(x(t)), t\ge 0$ and even proposed to color solutions according to that behaviour. The last  was an attempt to compensate the lack of information about $s(t)$ on the $(t,x)$-plane. It looks natural to draw the graphs of solutions in the $(t,s,x)$ system of coordinates. It is not difficult to do and then rotate the 3D-curves to see the standard $(t,x)$ and $(t,s)$ behaviour and an additional 3D properties. An example of such a property we formulate in the following theorem. 

\begin{theorem}\label{AR-2026-sdd-graph}
The graphs of all solutions (\ref{AR-sdd-2026_eq}), (\ref{AR-sdd-2026_ic}) 
drawn in the $(t,s,x)$ coordinate system, belong to the surface $ -t +s +g(x)=0  $. In a particular case of a (locally) linear SDD $g(x)=ax+b$, solutions (locally) belongs to the {\it plane} $-t+s+ax+b=0$.

\end{theorem}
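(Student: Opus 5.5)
The statement is essentially a reformulation of the definition of the delayed argument (\ref{eq:main3-s}), so the plan is to unwind what ``the graph of a solution in the $(t,s,x)$ coordinate system'' means. For a continuous solution $x(t)$ of (\ref{AR-sdd-2026_eq}), (\ref{AR-sdd-2026_ic}), defined on some interval $[0,\epsilon]$, I would define its extended graph as the set of points
\[
\Gamma_x=\{(t,s(t),x(t)) : t\in[0,\epsilon]\},\qquad s(t)\equiv t-g(x(t)).
\]
This is exactly the object used in the preceding discussion. It is a continuous curve in $\mathbb{R}^3$, because $x$ and $g$ are continuous. If $g$ is $C^1$, then for $t>0$ it is even a $C^1$ curve, since any continuous solution is $C^1$ for $t>0$.

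The key step is then a one-line verification. Take an arbitrary point of $\Gamma_x$ and substitute its coordinates into $\Phi(t,s,x)\equiv -t+s+g(x)$:
\[
\Phi\bigl(t,s(t),x(t)\bigr)=-t+\bigl(t-g(x(t))\bigr)+g(x(t))=0 .
\]
Hence $\Gamma_x\subset\{\Phi=0\}$. This surface is the graph of the continuous function $s=t-g(x)$ over the $(t,x)$-plane. The argument uses neither the particular solution nor the particular equation, so it holds simultaneously for \emph{all} solutions, including the red, yellow and blue families of Example 3. The differential equation itself plays no role beyond guaranteeing that $x(t)$ lies in the domain of $g$.

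For the second claim I would restrict to the neighbourhood $U$ of $\varphi(0)$ on which $g(x)=ax+b$. By continuity of the solution, there is $\epsilon'>0$ such that $x(t)\in U$ for $t\in[0,\epsilon']$. On this interval the surface $\Phi=0$ coincides with the plane $-t+s+ax+b=0$, so the corresponding piece of $\Gamma_x$ lies in that plane. The one point needing care is the meaning of ``locally''. The statement holds only for times at which $x(t)$ stays in the region of linearity of $g$. Following the remark after Lemma RS, this region may be only a half-neighbourhood $U_\pm(\varphi(0))$, in which case one takes those $t$ for which $x(t)\in U_\pm$. There is no genuine obstacle here; the main thing is to state precisely that the inclusion is local in $t$.
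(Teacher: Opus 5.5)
Your proof is correct. It rests on the same identity as the paper's, but it is more general and more complete than what the paper writes down.

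The paper gives no general argument. It says the proofs of the theorem and of Corollary~\ref{AR-2026-sdd-graph-cor} are ``essentially the same,'' and then only checks the identity on the explicit solutions of Example~3. It computes $s(t)$ from the closed-form expressions (\ref{red-solution}), (\ref{yellow-tau-solutions}), (\ref{blue-tau-solutions}), using $g(x)=|x|=x$ near $x=1$, and verifies $-t+s+x=0$ family by family.

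You instead substitute the definition $s(t)=t-g(x(t))$ directly, which gives three advantages:
\begin{itemize}
\item it needs no knowledge of the solutions, so it covers every continuous solution of every problem (\ref{AR-sdd-2026_eq}), (\ref{AR-sdd-2026_ic}), as the statement claims;
\item your continuity argument gives a precise meaning to ``locally'' in the linear case, including the half-neighbourhood situation from the remark after Lemma RS;
\item it makes explicit that the differential equation plays no role, which is equally (if silently) true of the paper's computation.
\end{itemize}

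What the paper's explicit check adds is illustration rather than proof. It shows the concrete space curves lying in one common plane $-t+s+x=0$, independent of $A,B,\alpha,\beta,\tau$: the red segment with $s\equiv -1$, the yellow branches with $s$ increasing, and the blue ones with $s$ decreasing. That picture is the point of that subsection, but logically it is a special case of your one-line identity.
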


\begin{corollary}\label{AR-2026-sdd-graph-cor}
The graphs of all solutions (\ref{red-solution}), (\ref{yellow-tau-solutions}), (\ref{blue-tau-solutions}) of the Cauchy problem 
(\ref{eq:main3}), (\ref{eq:main3-ic}) 
(with arbitrary parameters $A,B>0; \alpha, \beta \in (0,1); \tau\ge 0$), drawn in the $(t,s,x)$ coordinate system, belong to the plane $-t+s+x=0$.  

\end{corollary}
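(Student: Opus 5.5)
The plan is to derive the corollary directly from Theorem~\ref{AR-2026-sdd-graph}, specialized to equation (\ref{eq:main3}). There the delay is $g(x)=|x|$, and by the definition (\ref{eq:main3-s}) of the delayed argument, any solution's graph $(t,s(t),x(t))$ satisfies $-t+s(t)+|x(t)|=0$ identically. This identity is just the definition of $s(t)$; it does not even use the differential equation. So the only real task is to replace $|x|$ by $x$, which means checking that each listed solution stays positive on its interval of definition. Near $x=1$ the delay $g(x)=|x|$ is the linear map $x\mapsto x$, corresponding to $a=1$, $b=0$ in the theorem, so positivity gives the plane $-t+s+x=0$.

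The positivity checks are elementary, one for each solution.
\begin{itemize}
\item For the red solution (\ref{red-solution}): $x^r(t)=1+t\ge 1>0$ for all $t\ge0$.
\item For the blue family (\ref{blue-tau-solutions}): $y^\tau(t)\ge 1+t>0$, since $D(t-\tau)^{1/(1-\beta)}\ge0$. This holds for every $\tau\ge0$, every $\sigma$, and all $B>0$, $\beta\in(0,1)$.
\item For the yellow family (\ref{yellow-tau-solutions}): on $[0,\tau]$ we again have $x^\tau(t)=1+t$. On $(\tau,\tau+\sigma]$ we have $x^\tau(t)\ge 1+t-C\sigma^{1/(1-\alpha)}$.
\end{itemize}

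The yellow family is the one step that needs care, because the correction term is subtracted. I would handle it through the fixed $\sigma>0$, which I take to be chosen small enough for the family to be a genuine solution of (\ref{eq:main3}), (\ref{eq:main3-ic}). That requirement forces positivity. The verification of (\ref{yellow-tau-solutions}) uses that the delayed argument $s(t)=t-x^\tau(t)=-1+C(t-\tau)^{1/(1-\alpha)}$ lies in $(-1,-\delta]$, where $\varphi$ has the H\"older branch $-1+A(\theta+1)^\alpha$. Hence $C(t-\tau)^{1/(1-\alpha)}\le 1-\delta<1$, which gives $x^\tau(t)\ge 1+t-(1-\delta)>0$.

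With positivity in hand, $g(x^\tau(t))=x^\tau(t)$, and likewise for the other two solutions. The identity from the first paragraph then becomes $-t+s+x=0$. To be safe, I would also check this directly on the formulas. For the red solution, $s(t)=t-(1+t)=-1$, so $-t+(-1)+(1+t)=0$. For the blue family, $s(t)=-1-D(t-\tau)^{1/(1-\beta)}$, and $x+s=t$ holds identically; the yellow family works the same way. These checks do not depend on $A,B,\alpha,\beta,\tau$, which gives the corollary for arbitrary parameters.
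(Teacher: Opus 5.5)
Your proposal is correct and is essentially the paper's proof: the paper writes each solution as $(t,\,t-g(x(t)),\,x(t))$, tacitly replaces $g(x)=|x|$ by $x$ near $x=1$, and checks $-t+s+x=0$ by direct substitution, exactly as in your final paragraph. Your explicit positivity check fills in the step the paper leaves implicit. For the yellow family, state that check directly as the smallness condition $C\sigma^{1/(1-\alpha)}\le 1-\delta$ on $\sigma$, or derive it from continuity at $t=\tau$, because writing $s=t-x^\tau$ already presumes $x^\tau>0$.
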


{\it Proof.} Since the proof of theorem \ref{AR-2026-sdd-graph} and Corollary \ref{AR-2026-sdd-graph-cor} are essentially the same, we perform it on the key example (\ref{eq:main3}), (\ref{eq:main3-ic}) to  emphasize the main idea.  We need to  write explicitly solutions in the $(t,s,x)$-coordinates. 

In case $\tau>0$ and $t\in [0,\tau]$, all the solutions coincide  (the red part). So we start with the red solution (\ref{red-solution}) ($x^{r}(t)=t+1, t>0$)

\begin{equation}\label{red-solution-3D}
x^{r,3D}(t)= 
\left(
\begin{array}{l} t  \\ -1  \\ 1+t  
\end{array}
\right).
\end{equation}
A straightforward check gives $-t+(-1)+ (1+t)=0$. 

Now we should check the property for (\ref{yellow-tau-solutions}), (\ref{blue-tau-solutions}) and $t>\tau$. 
The right-side family (\ref{yellow-tau-solutions}) for $t> \tau$   looks as 
\begin{equation}\label{yellow-tau-solutions-3D}
x^{\tau,3D}(t)=
\left(
\begin{array}{l}
t\\
t-g\left( 1 + t - C (t-\tau)^{\frac{1}{1-\alpha}}\right)\\
1 + t - C (t-\tau)^{\frac{1}{1-\alpha}}
\end{array}
\right) 
=
\left(
\begin{array}{l}
t\\
 -1 +  C (t-\tau)^{\frac{1}{1-\alpha}}\\
1 + t - C (t-\tau)^{\frac{1}{1-\alpha}}
\end{array}
\right),
\end{equation}
with $C\equiv A^\frac{1}{1-\alpha}(1-\alpha)^\frac{1}{1-\alpha}$.  
So, $-t+(-1 +  C (t-\tau)^{\frac{1}{1-\alpha}})+(1 + t - C (t-\tau)^{\frac{1}{1-\alpha}}= 0.$

The left-side family (\ref{blue-tau-solutions})   for $t> \tau$   looks as 
  
\begin{equation}\label{blue-tau-solutions-3D}
y^{\tau,3D}(t)=
\left(
\begin{array}{l}
t\\
t-g\left( 1 + t + D (t-\tau)^{\frac{1}{1-\beta}}\right)\\
1 + t + D (t-\tau)^{\frac{1}{1-\beta}}
\end{array}
\right) 
= 
\left(
\begin{array}{l}
t\\
-1 - D (t-\tau)^{\frac{1}{1-\beta}}\\
1 + t + D (t-\tau)^{\frac{1}{1-\beta}}
\end{array}
\right)
\end{equation}
with $D\equiv B^\frac{1}{1-\beta}(1-\beta)^\frac{1}{1-\beta}$. 
So, $-t+(-1 - D (t-\tau)^{\frac{1}{1-\beta}})+(1 + t + D (t-\tau)^{\frac{1}{1-\beta}})=0 $. 
It completes the proof. 

%
%

\begin{remark}
This linear pattern  may looks unexpected in context of essentially nonlinear delay equation (\ref{eq:main3}), but we remind the locally linear form of SDD functional $g(x)=|x|$ in a neighbourhood of $\varphi(0)=x(0)=1$ and the properties stated in Lemma RS. 
An interesting property is that this is {\it the same} plane for all the  parameters  $A,B>0; \alpha, \beta \in (0,1); \tau\ge 0$. 
\end{remark}

\section{Strictly monotone delay arguments}

As we saw in the 
examples above (remind also Remark~\ref{colors-const-delay}), the monotonicity of $s(t)$ may play an important role. In this Section we discuss an interesting  result \cite[p.621, Theorem 3.1]{Winston-JMAA-1974} by E. Winston. It is formulated for a more general system (the right hand side of the system also contains $x(t)$), so we follow the original framework. 
Now we consider (\ref{sdd-winst-E}). 

{\bf Proposition 1.} 
\cite[p.621, Theorem 3.1]{Winston-JMAA-1974}. 
{\it 
Suppose $F: D \times D \to \mathbb{R}^n$ is locally Lipschitz continuous,
and $g: D \to \mathbb{R}^{+}$ has a locally Lipschitz continuous derivative. If there exists
$\eta > 0$ such that
\begin{equation}\label{sdd-winst-*}
|y| < \eta \Rightarrow \langle g^\prime (x), F(x,y) \rangle < 1, 
\end{equation} 
then solutions of (\ref{sdd-winst-E}) with continuous initial functions $\varphi$ such that $||\varphi||_C<\eta$, are
unique.
}

\medskip

Following the line of the proof of this theorem, one sees that condition (\ref{sdd-winst-*}) 
is used to show the (strict) monotonicity of the delay argument $s(t)$. In the original proof \cite[p.621, Theorem 3.1]{Winston-JMAA-1974} it is a {\it strictly  increasing} function (for yellow solutions). We can extend the result to cover both cases yellow and blue solutions.

\smallskip

{\bf Proposition 2.} 
{\it 
Suppose $F: D \times D \to \mathbb{R}^n$ is locally Lipschitz continuous,
and $g: D \to \mathbb{R}^{+}$ has a locally Lipschitz continuous derivative. 
If 
\begin{equation}\label{sdd-AR-prop2}
\langle g^\prime (\varphi(0)), F(\varphi(0),\varphi(-g(\varphi(0)))) \rangle 
 \neq 1,  
\end{equation} 
then solutions of (\ref{sdd-winst-E}) with continuous initial functions $\varphi$  are
unique. Here $\langle \cdot, \cdot \rangle$ is the inner product in $\mathbb{R}^n$. 
}

\begin{remark} We can reformulate Proposition 2 in a way similar  to Proposition 1. To do it we just extend  (\ref{sdd-winst-*}) to the following condition: there exists
$\eta > 0$ such that
\begin{equation}\label{sdd-winst-*-ext}
|y| < \eta \Rightarrow \langle g^\prime (x), F(x,y) \rangle \neq  1. 
\end{equation} 
From our point of view, the property of smallness $|y| < \eta$, which is used in (\ref{sdd-winst-*}) and (\ref{sdd-winst-*-ext}) is not essential, while  (\ref{sdd-AR-prop2}) gives the essential condition for the (strict) monotonicity of the delay argument $s(t)$.
\end{remark}

\smallskip

{\it Proof of Proposition 2.} 
The existence of solution(s) to the problem (\ref{sdd-winst-E}), (\ref{AR-sdd-2026_ic}) follows from the Brouwer's fixed-point theorem. 
From that classical proof one knows that there are $\widetilde \alpha, \widetilde \beta >0$ such that solution(s) belong to the set 
$A\equiv \{ x(t)\in C([-h,\widetilde \alpha];\mathbb{R}^n) : x_0=\varphi,\, \max_{t\in [0,\widetilde \alpha]} ||x(t)-\varphi(0)||\le \widetilde \beta \}. $ 
For any $x(\cdot)\in A$ let us consider, see (\ref{sdd-dot-s}) below, the function 
$$\Phi(t,x(\cdot))\equiv 1 - \langle g^\prime (x(t)), F(x(t),\varphi(t-g(x(t)))) \rangle.
$$ 
It is continuous in $t\in [0,\widetilde \alpha]$ and, by (\ref{sdd-AR-prop2}), $\Phi(0,x(\cdot))\neq 0.$ By the classical theorem, 
there exist $\widehat \alpha \in (0,\widetilde \alpha], \gamma>0$ such that $|\Phi(t,x(t))|\ge \gamma>0$ 
for all $t\in [0,\widehat \alpha]$. 
Since (along a solution) 
\begin{equation}\label{sdd-dot-s}
\dot s(t) = 1 - \langle g^\prime (x(t)), \dot x(t) \rangle 
 = 1 - \langle g^\prime (x(t)), F(x(t),\varphi(s(t))) \rangle 
\end{equation} 
and $\dot s(t)= \Phi(t,x(t))$, the property $|\dot s(t)|\ge \gamma>0$ gives a strict monotonicity of $s(t)$. 
Hence there exists the inverse $t=t(s)$ and it is  differentiable with $\frac{dt}{ds} 
= \left[ \frac{ds}{dt} \right]^{-1}
$ (bounded). 

In this case, one defines $w(s)\equiv x(t)$, which is $w(s)\equiv x(t) = x(t(s)).$ At this moment the visialization in $(t,s,x)$ plane is particularly useful. 
Using $\dot s(t)\neq 0$ (in both cases  $\dot s(t)> 0$ and  $\dot s(t)< 0$), one has
$$\dot w(s) = \frac{dw}{ds}(s)= \frac{dx}{dt}\cdot \frac{dt}{ds} 
=  \frac{dx}{dt} \cdot \left[ \frac{ds}{dt} \right]^{-1}
$$
$$ = F(x(t), \varphi(s(t))) \cdot \left[ 1 - \langle g^\prime (x(t)), F(x(t),\varphi(s(t))) \rangle \right]^{-1}
$$
$$ = F(w, \varphi(s)) \cdot \left[ 1- \langle g^\prime (w), F(w,\varphi(s)) \rangle \right]^{-1} \equiv G(s,w). 
$$
Here $G$ is continuous in $s$ and locally Lipschitz in $w.$ 

Hence, the Cauchy problem for  
the ODE $ \frac{dw}{ds}(s)= G(s,w)$ with initial conditions $w(s^0)=x(0)$ has a {\it unique}
solution $w(s)\equiv x(t)$. As before, $s^0\equiv -g(x(0))<0$. 
This solution is the solution to (\ref{sdd-winst-E}), (\ref{AR-sdd-2026_ic}). 
The proof  of Proposition 2 is complete.   

%

\begin{remark}
The Cauchy problem for the ODE is solved in both directions (increasing $s$ if  $\dot s(t)> 0$ and decreasing $s$ if $\dot s(t)< 0$).
\end{remark}

\section{Concluding remarks}

1) We distinguish an immediate non-uniqueness and waiting-time non-uniqueness. Solutions (\ref{red-solution}) and (\ref{yellow-tau-solutions}), (\ref{blue-tau-solutions}) with $\tau=0$ represent immediate non-uniqueness, while (\ref{yellow-tau-solutions}), (\ref{blue-tau-solutions}) with $\tau>0$ represent  waiting-time non-uniqueness. 

\medskip

\noindent 2) Remind the question (B).  Which properties of an equation and an initial function are "responsible" for the non-uniqueness of continuous solutions? 
It looks that \smallskip

a) the behaviour of delay argument $s(t)$ is important. 
 \smallskip
 
b) Different non-Lipschitz properties of $\varphi$ on both half-neighbourhoods  of $s^0$ (point of non-
Lipschitz) may give birth to different families of solutions. See example 3.
 \smallskip
  
c) Waiting-time non-uniqueness may occur only in cases when either the red solution exists or the delay argument $s(t)$ leaves and latter returns to the point $s^0$.  

%
%

\bigskip

\bigskip

\hrule \hfill

\bigskip

\hfill Submitted

\bigskip

\hfill March 30, 2026

\bigskip

\hfill Kharkiv, Ukraine
%
%

%

\end{document}